\newtheorem{theorem}{Theorem}
\newtheorem{lemma}[theorem]{Lemma}
\newtheorem{proposition}[theorem]{Proposition}
\newtheorem{corollary}[theorem]{Corollary}
\newtheorem{assumption}[theorem]{Assumption}
\newtheorem{remark}{Remark}[theorem]
\newenvironment{proof}[1][Proof]{\noindent\textbf{#1.} }{\mbox{}\hfill $\Box$\smallskip}
\begin{document}

\date{\vspace{-12.5mm}}
\title{A sufficient condition for a discrete spectrum of the Kirchhoff plate
with an infinite peak\thanks{%
The work is supported by the Russian Foundation for Basic Research (project
12-01-00348). The first author is also supported by the Chebyshev Laboratory
(Department of Mathematics and Mechanics, Saint-Petersburg State University)
under the grant 11.G34.31.0026 of the Government of the Russian Federation
and grant 6.38.64.2012 of Saint-Petersburg State University.}}
\author{F.L. Bakharev, S.A. Nazarov, G.H. Sweers}
\maketitle

\begin{abstract}
Sufficient conditions for a discrete spectrum of the biharmonic equation in
a two-dimensional peak-shaped domain are established. Different boundary
conditions from Kirchhoff's plate theory are imposed on the boundary and the
results depend both on the type of boundary conditions and the sharpness
exponent of the peak.
\end{abstract}

{\footnotesize Keywords: Kirchhoff plate, cusp, peak, discrete and
continuous spectra. }

\section{Motivation}
Elliptic boundary value problems on domains which have a Lipschitz boundary
and a compact closure, in particular when they generate positive self-adjoint
operators, have fully discrete spectra. However, if the domain loses the
Lipschitz property or compactness, other situations may occur. It is well-known
that for the Dirichlet case boundedness is sufficient but not necessary for
a discrete spectrum. See the famous paper by Rellich \cite{rellich} or the more
recent contributions by \cite{rosenb,vdB}. On the other hand, for the Neumann problem
of the Laplace operator there exist numerous examples of bounded domains such that
the spectrum gets a non-empty continuous component (see e.g.
\cite{KuGi, MaPo, MaPo-e, Simon, Hempel-ea}).

The literature on the spectra for the
Laplace operator with various boundary conditions is focussed on domains that
have a cusp, a finite or infinite peak or horn
\cite{vdB, Hempel-ea, JakMolS, DavSim, Jak, Ivrii, BoLe, vdBL, Kov} or even considered
a rolled horn \cite{Simon}.

The criteria in \cite{AdFu} and \cite{Ev} for the embedding $H^{1}(\Omega )\subset
L_{2}(\Omega )$ to be compact, show that the Neumann-Laplace problem on a
domain $\Omega $ with the infinite peak
\begin{equation}
\Pi _{R}=\{x=(x_{1},x_{2})\in {\mathbb{R}^{2}}:\quad
x_{1}>R,-H(x_{1})<x_{2}<H(x_{1})\},  \label{0.1}
\end{equation}%
where the function $H>0$ is smooth and monotone decreasing, has a discrete spectrum if and
only if
\begin{equation}
\lim\limits_{y\rightarrow +\infty }\int_{y}^{+\infty }\frac{H(\eta )}{H(y)}%
\,d\eta =0\quad (\Leftrightarrow \quad \lim\limits_{y\rightarrow +\infty }%
\frac{H(y+\epsilon )}{H(y)}=0\,\,\mbox{\rm for\,any}\,\,\epsilon >0).
\label{0.2}
\end{equation}%
The function $H$ is assumed to have a first derivative that tends to zero
and a bounded second derivative. It will be convenient to use the notation $%
\Upsilon (y)=(-H(y),H(y))$. 
Here is an image of such a domain:\nopagebreak

\noindent\resizebox{\textwidth}{!}{\includegraphics{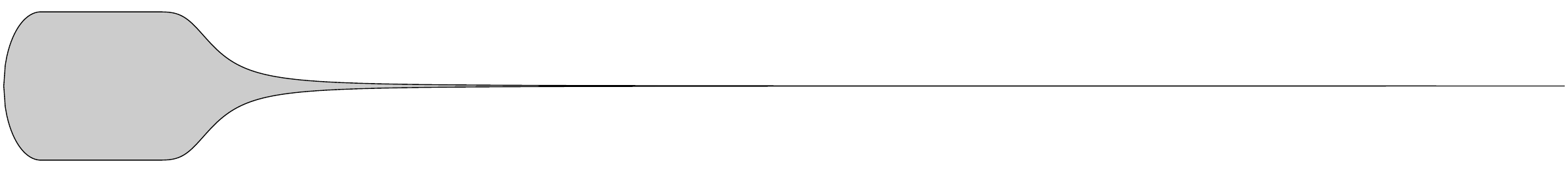}}

The simplest boundary irregularity violating the Lipschitz condition, is but
the (finite) peak
\begin{equation}
\varpi _{R}=\{x:0<x_{1}<R,-h(x_{1})<x_{2}<h(x_{1})\},  \label{0.3}
\end{equation}%
where $h(x_{1})=h_{0}x_{1}^{1+\alpha }$, $h_{0}>0$ and $\alpha >0$.
Nevertheless, the spectrum of the Neumann problem in the domain with this
peak stays discrete. See Remark \ref{bdd}.

A criterion~(\cite{na424}) for an essential spectrum in
the Neumann problem for elliptic systems of second order differential
equations with a polynomial property is derived in \cite{na262}. In
particular it shows that the continuous spectrum of an elastic body with $%
\alpha \geq 1$ for the peak \eqref{0.3}, is non-empty (see~\cite{na407,
na431}). This phenomenon of generating wave processes in a finite volume, is
known experimentally and used in the engineering practice to construct wave
dampers, \textquotedblleft black holes\textquotedblright , for elastic
oscillations (see~\cite{Mir1,Kryl}, etc.).

In this paper we study the spectra of boundary value problems for the
Kirchhoff model of a thin elastic plate described by the biharmonic operator 
$\Delta^{2}$. We consider the three mechanically most reasonable cases, namely
where the lateral sides of the peak are supplied with one of the following
three types of the boundary conditions: the clamped edge 
(\textit{\textbf{D}irichlet}), the traction-free edge (\textit{\textbf{N}eumann}) 
and the
hinged edge (\textit{\textbf{M}ixed}). In all the cases the spectrum of the
problem in a bounded domain with the peak \eqref{0.3} is discrete. We derive
sufficient conditions for the spectrum to be discrete for the boundary value
problem on an unbounded domain with a peak as in \eqref{0.1}.

If a \textquotedblleft sufficient number\textquotedblright\ of Dirichlet
conditions are imposed on the lateral sides of the peak (the cases \textit{%
\textbf{D--D}}, \textit{\textbf{D--M}}, \textit{\textbf{D--N}} and \textit{%
\textbf{M--M}}; see formulas (\ref{0.5}--\ref{0.7}) and~\eqref{0.13},~%
\eqref{0.14}), then the proof that the spectrum is discrete becomes rather
simple (Theorem \ref{Theorem1}). Indeed, it suffices to apply the weighted
Friedrich's inequality \eqref{0.15} and to take into account the decay of
the quantity $H(y)$ as $y\rightarrow +\infty $. With a different argument,
it is straightforward to obtain a condition for the discrete spectrum if the
peak's edges are traction-free (the case \textit{\textbf{N--N}}; see
Corollary \ref{Corollary2}). Indeed, as shown in~\cite{AdFu}, the second
condition in \eqref{0.2} implies a criterion for the compact embedding ${H}%
^{m}(\Omega )\subset L^{2}(\Omega )$ for all $m$ (we just need $m=2$).
Therefore, we are only interested into investigating the case \textit{%
\textbf{M--N}}, i.e., one side of the peak is traction-free and the other
hinged. We obtain a sufficient condition for the discrete spectrum (Theorem %
\ref{Theorem3}) by applying weighted inequalities of Hardy type (Lemmas \ref%
{Lemma1} and \ref{Lemma2}). This approach differs from the one used in \cite%
{AdFu,Ev}, and we can also use it for the case \textit{\textbf{N--N}}
(Theorem \ref{Theorem2}).

The obtained results essentially differ from each other: under the
conditions \eqref{0.13} and also under \eqref{0.14} any decay of $H$ is
enough, but the case \textit{\textbf{M--N}} needs a power decay rate with
the exponent $\alpha >1$ while the case \textit{\textbf{N--N}} needs a
superexponential decay rate (see comments to Theorems \ref{Theorem2} and \ref%
{Theorem3}).

\section{The Kirchhoff plate model}
Let $\Omega $ be a domain in the plane ${\mathbb{R}}^{2}$ with a smooth (of
class $C^{\infty }$) boundary $\partial \Omega $, coinciding with the peak %
\eqref{0.1} inside the half-plane $\{x:x_{1}>R\}$ and being bounded outside
this half-plane. We regard $\Omega $ as the projection of a thin isotropic
homogeneous plate and apply the Kirchhoff theory (see \cite[\S 30]{Mikh},
\cite[Ch.\,7]{Nabook} etc). So we arrive at the fourth-order differential
equation
\begin{equation}
\Delta ^{2}u(x)=\lambda u(x),\quad x\in \Omega ,  \label{0.4}
\end{equation}%
which describes transverse oscillations of the plate. Here, $u(x)$ is the
plate deflection, and $\lambda $ a spectral parameter proportional to the
square of the oscillation frequency.

The following sets of boundary conditions have a clear physical
interpretation (see \cite[\S 30]{Mikh}, \cite[\S 1.1]{Guido} etc):

\begin{description}
\item[(\textit{D})] Dirichlet for a clamped edge:
\begin{equation}
u(x)=\partial _{n}u(x)=0,\quad x\in\Gamma _{D}.  \label{0.5}
\end{equation}

\item[(\textit{N})] Neumann for a traction-free edge:
\begin{equation}
\left\{
\begin{array}{c}
\partial _{n}\Delta u(x)-(1-\nu )(\partial _{s}\varkappa(x) \partial
_{s}u(x)-\partial _{s}^{2}\partial _{n}u(x))=0,\smallskip \\
\Delta u(x)-(1-\nu )(\partial _{s}^{2}u(x)+\varkappa(x) \partial _{n}u(x))=0,%
\end{array}%
\right. x\in \Gamma _{N}.  \label{0.6}
\end{equation}

\item[(\textit{M})] Mixed for a hinged edge:
\begin{equation}
u=\Delta u(x)- (1-\nu )\varkappa(x)\partial _{n}u(x)=0,\quad x\in\Gamma _{M}.
\label{0.7}
\end{equation}
\end{description}

Here, $\partial _{n}$ and $\partial _{s}$ stand for the normal and
tangential derivatives, $\varkappa (x)$ is the signed curvature of the
contour $\Gamma $ at the point $x\in \Gamma $ positive for convex boundary
parts, and $\nu \in \lbrack 0,1/2)$ is the Poisson ratio. Finally, $\Gamma
_{D}$, $\Gamma _{N}$, and $\Gamma _{M}$ are the unions of finite families of
open curves and $\Gamma =\overline{\Gamma _{D}}\cup \overline{\Gamma _{N}}%
\cup \overline{\Gamma _{M}}$, two of which may be empty. In what follows it
is convenient to use the notation $y=x_{1}$ and $z=x_{2}$.

The general properties of the spectra depend on which of the boundary
conditions (\ref{0.5}--\ref{0.7}) are imposed on the upper ($+$) and lower ($%
-$) sides $\Sigma _{R}^{\pm }=\{x:y>R,z=\pm H(y)\}$ of the peak. To be more
precise, we consider the spectral problem of the variational formulation
\begin{equation}
a(u,v)=\lambda (u,v)_{\Omega },\quad v\in {\mathbb{H}}\,,  \label{0.9}
\end{equation}%
where $(\cdot,\cdot)_{\Omega }$ is a scalar product in the Lebesgue space $%
L_{2}(\Omega )$, $a$ is the symmetric bilinear form such that $\frac{1}{2}%
a(u,u)$ is the elastic energy stored in the plate:
\begin{equation}
a\left( u,u\right) =\int_{\Omega }\left( \left\vert \frac{\partial ^{2}u}{%
\partial x_{1}^{2}}\right\vert ^{2}+\left\vert \frac{\partial ^{2}u}{%
\partial x_{2}^{2}}\right\vert ^{2}+2\left( 1-\nu \right) \left\vert \frac{%
\partial ^{2}u}{\partial x_{1}\partial x_{2}}\right\vert ^{2}+2\nu \frac{%
\partial ^{2}u}{\partial x_{1}^{2}}\frac{\partial ^{2}u}{\partial x_{2}^{2}}%
\right) dx,  \label{0.10}
\end{equation}%
and ${\mathbb{H}}$ is the subspace of functions $u\in H^{2}(\Omega )$,
satisfying the conditions \eqref{0.5} on $\Gamma _{D}$ and $u=0$ on $\Gamma
_{M}$ in the sense of traces. One directly verifies that
\begin{equation*}
a(u,u)\geq (1-\nu )\sum_{j,k=1}^{2}\int_{\Omega }\left\vert \frac{\partial
^{2}u}{\partial x_{j}\partial x_{k}}\right\vert ^{2}\,dx.
\end{equation*}%
Hence, the bilinear form on the left-hand side of \eqref{0.9} is positive
and closed on ${\mathbb{H}}\times {\mathbb{H}}$, and according to \cite[\S %
10.1]{BiSo} we can associate a unbounded positive self-adjoint operator $A$
in the Hilbert space $L_{2}(\Omega )$ to problem \eqref{0.9} with domain $%
D(A)={\mathbb{H}}$. Its spectrum $\sigma $ belongs to $\left[ 0,\infty
\right) $ and, moreover, the spectrum is discrete if and only if the
embedding ${\mathbb{H}}\subset L_{2}(\Omega )$ is compact (cf. \cite[Theorem
10.1.5]{BiSo}). This is what we call the spectrum for (\ref{0.4}--\ref{0.7})
and which will be studied in the present paper. The remaining boundary
conditions in (\ref{0.5}--\ref{0.7}) appear as intrinsic natural conditions
from (\ref{0.9}--\ref{0.10}), see again \cite[\S 30]{Mikh}, \cite[\S 1.1]%
{Guido} etc.

\section{Simple cases}
Assume that the chosen boundary conditions provide at least one of the
following two groups of relations
\begin{eqnarray}
i) & u=0\hspace{1cm}\text{ on }\Sigma _{R}^{+}\cup \Sigma _{R}^{-},\hspace{%
10mm}  \label{0.13} \\
ii) & u=\partial _{n}u= 0\ \text{ on }\Sigma _{R}^{+}\quad\text{ (or on }%
\Sigma _{R}^{-}\text{)}.  \label{0.14}
\end{eqnarray}%
In both the cases \eqref{0.13} and \eqref{0.14} the following version of
Friedrich's inequality is valid
\begin{equation}
\int_{\Upsilon (y)}\left\vert \frac{\partial ^{2}u}{\partial z^{2}}%
(y,z)\right\vert ^{2}\,dz\geq \frac{c}{H(y)^{4}}\int_{\Upsilon
(y)}\left\vert u(y,z)\right\vert ^{2}\,dz\,,  \label{0.15}
\end{equation}%
and, therefore,
\begin{equation}
a(u,u)\geq c\int_{\Omega }H(y)^{-4}\left\vert u(x)\right\vert ^{2}\,dx\,.
\label{0.16}
\end{equation}%
The embedding operator $\gamma :{\mathbb{H}}\rightarrow L^{2}(\Omega )$ can
be represented as the sum $\gamma _{0}+\gamma _{\rho}$, where $\rho\geq R$
is large positive, $\gamma _{0}=\gamma -\gamma _{\rho}$, and $\gamma _{\rho}$
includes the operator of multiplication by the characteristic function of $%
\Pi _{\rho}$. The operator $\gamma _{0}$ is compact, and the norm of $\gamma
_{\rho}$, in view of \eqref{0.16}, does not exceed $c\max \left\{
H(x_{1})^{-2};~x_{1}\geq \rho\right\} $. Since the function $H$ decays, this
quantity goes to zero when $\rho\rightarrow +\infty $, i.e., the operator $%
\gamma $ can be approximated by compact operators in the operator norm.
Thus, $\gamma $ is compact and the following result is proved.

\begin{theorem}
\label{Theorem1} Suppose that the boundary conditions for problem 
\eqref{0.4} as given in {\upshape (\ref{0.5}-\ref{0.7})} contain one of the cases \eqref{0.13}
or \eqref{0.14}. Then the spectrum is discrete.
\end{theorem}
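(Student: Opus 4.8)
The plan is to reduce everything to the one-dimensional weighted Friedrich inequality \eqref{0.15}, which is the only substantive ingredient; once it is available, discreteness follows from the compactness-plus-small-norm decomposition already sketched before the statement. First I would fix $y>R$ and work with the slice $w(z)=u(y,z)$ on $\Upsilon(y)=(-H(y),H(y))$. In case \eqref{0.13} the Dirichlet traces give directly $w(-H(y))=w(H(y))=0$. In case \eqref{0.14} the full Dirichlet condition is imposed on one lateral curve, say $\Sigma_R^{+}=\{z=H(y)\}$; since $u=0$ along this curve its tangential derivative $\partial_s u$ vanishes there as well, and together with $\partial_n u=0$ this forces $\nabla u=0$ on the curve, hence $w(H(y))=w'(H(y))=0$. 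By the trace theorem and Fubini's theorem these endpoint relations hold for almost every $y$, with $w\in H^2(\Upsilon(y))$.

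To establish \eqref{0.15} with a constant independent of $y$, I would rescale to a reference interval by $z=H(y)t$, $t\in(-1,1)$, and $\widetilde w(t)=w(H(y)t)$. A direct computation (the factors $H(y)^{-3}$ cancel on both sides) turns \eqref{0.15} into the $y$-free inequality
\[
\int_{-1}^{1}|\widetilde w''(t)|^{2}\,dt\ \ge\ c\int_{-1}^{1}|\widetilde w(t)|^{2}\,dt,
\]
to be proved on the closed subspace $V\subset H^{2}(-1,1)$ fixed by the endpoint conditions ($\widetilde w(\pm1)=0$ in case \eqref{0.13}, and $\widetilde w(1)=\widetilde w'(1)=0$ in case \eqref{0.14}). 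The crux is that the form $\widetilde w\mapsto\int_{-1}^{1}|\widetilde w''|^{2}$ is positive definite on $V$: its kernel consists of affine functions, and an affine function satisfying either set of endpoint conditions is identically zero. Since $H^{2}(-1,1)\hookrightarrow L_2(-1,1)$ is compact, the Rayleigh quotient attains a strictly positive minimum $c>0$, which is exactly \eqref{0.15}.

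It then remains to assemble the pieces. Integrating \eqref{0.15} in $y$ over the peak and using the coercivity bound $a(u,u)\ge(1-\nu)\int_{\Omega}|\partial_z^{2}u|^{2}\,dx$ recalled after \eqref{0.10} yields \eqref{0.16}. Decomposing the embedding $\gamma:\mathbb{H}\to L_2(\Omega)$ as $\gamma=\gamma_{0}+\gamma_{\rho}$ with $\gamma_{\rho}=\chi_{\Pi_{\rho}}\gamma$, and using that $H$ is decreasing so that $H(y)\le H(\rho)$ on $\Pi_{\rho}$, inequality \eqref{0.16} gives $\|\gamma_{\rho}u\|_{L_2(\Omega)}^{2}\le C\,H(\rho)^{4}\,a(u,u)$, whence $\|\gamma_{\rho}\|\le C'H(\rho)^{2}\to0$ as $\rho\to+\infty$. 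Meanwhile $\gamma_{0}$ factors through the embedding of $H^{2}$ into $L_2$ on the bounded Lipschitz domain $\Omega\cap\{x_{1}<\rho\}$ (the cusp being cut off), hence is compact by Rellich--Kondrachov. Thus $\gamma$ is a norm limit of compact operators, so $\gamma$ is compact and, by the compactness criterion recalled above, the spectrum is discrete. The only delicate points are the uniformity in $y$ of the constant in \eqref{0.15} and, in case \eqref{0.14}, the observation that clamping on a single lateral curve already pins down both $w$ and $w'$ at that endpoint; the remainder is routine.
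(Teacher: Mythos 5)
Your proposal is correct and follows essentially the same route as the paper: the weighted Friedrich inequality \eqref{0.15} on cross-sections, the consequent bound \eqref{0.16}, and the decomposition $\gamma=\gamma_{0}+\gamma_{\rho}$ into a compact part plus a part whose norm is $O(H(\rho)^{2})\to 0$. The only difference is that you actually prove \eqref{0.15} (endpoint conditions on slices, rescaling to $(-1,1)$, and a Rayleigh-quotient compactness argument killing the affine kernel), a step the paper merely asserts, so this is a filled-in detail rather than a different approach.
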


\begin{remark}\label{bdd} By a similar reasoning, we may conclude that in the bounded domain $\omega $, 
with the peak as in \eqref{0.3}, equation \eqref{0.4} has a discrete
spectrum for any set of conditions (\ref{0.5}--\ref{0.7}) on the arc~$%
\partial \omega \setminus \mathcal{O}$. This fact follows from the
inequality (see \cite{na401}):
\begin{equation*}
\left\Vert \left\vert x\right\vert ^{-1}u;L_{2}(\omega )\right\Vert ^{2}\leq
c\left( \left\Vert \nabla u;L_{2}(\omega )\right\Vert ^{2}+\left\Vert
u;L_{2}(\omega \setminus \varpi _{R})\right\Vert ^{2}\right) \,.
\end{equation*}
\end{remark}

\section{Auxiliary inequalities}
First of all we prove some one-dimensional weighted inequalities, two of
which are of Hardy type involving a weight function $h$ as follows.

\begin{assumption}
\label{ass2}Let $h$ be a positive weight function of class $C^{2}$ on $%
[0,+\infty )$ satisfying 
\begin{itemize}
\item $\displaystyle\int_{0}^{\infty }h\left( s\right)
ds<\infty \ $ and 
\item for some $T$ large $h^{\prime }\left( t\right) <0$ and $%
h^{\prime \prime }\left( t\right) >0$ for $t\in \left( T,\infty \right) $.
\end{itemize}
\end{assumption}

Throughout this section $h$ is supposed to satisfy this assumption.

\begin{lemma}
\label{Lemma1}If $U$ is differentiable for $y\geq R$ and $U(R)=0$, then
\begin{equation*}
\int_{R}^{+\infty }h(y)\left\vert U(y)\right\vert ^{2}\,dy\leq
\int_{R}^{+\infty }F_{h}(y)\left\vert \partial _{y}U(y)\right\vert ^{2}\,dy,
\end{equation*}%
where $F_{h}(y)=\frac{4}{h(y)}\left( \int_{y}^{+\infty }h(\tau )\,d\tau
\right) ^{2}$.
\end{lemma}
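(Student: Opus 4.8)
The plan is to reduce the inequality to an integration by parts against the tail of $h$, followed by an elementary Young's inequality with an optimally chosen balance parameter. Set $G(y)=\int_y^{+\infty}h(\tau)\,d\tau$, so that $G$ is positive, $G'(y)=-h(y)$, and $G(y)\to 0$ as $y\to+\infty$ because $h$ is integrable; note that then $F_h(y)=4\,G(y)^2/h(y)$. It is worth observing that only the positivity and integrability of $h$ will be used here: the monotonicity and convexity assumed in Assumption \ref{ass2} are not needed for this particular lemma.

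First I would establish the estimate on a truncated interval $[R,N]$ and only afterwards pass to the limit $N\to+\infty$. Writing $h=-G'$ and integrating by parts,
\[
\int_R^N h(y)\,|U(y)|^2\,dy = -\,G(N)|U(N)|^2 + G(R)|U(R)|^2 + 2\int_R^N G(y)\,\mathrm{Re}\big(\overline{U(y)}\,\partial_y U(y)\big)\,dy.
\]
The middle boundary term vanishes since $U(R)=0$, while the term $-\,G(N)|U(N)|^2$ is nonpositive and may simply be discarded. This is the crucial point of the argument: it lets me avoid imposing any a priori decay of $U$ at infinity, which would otherwise be the main obstacle in controlling the boundary contribution. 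Thus
\[
\int_R^N h\,|U|^2\,dy \le 2\int_R^N G\,|U|\,|\partial_y U|\,dy.
\]

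Next I would apply Young's inequality $2ab\le \tfrac12 a^2 + 2b^2$ with $a=\sqrt{h}\,|U|$ and $b=(G/\sqrt{h})\,|\partial_y U|$; the weight $\tfrac12$ is precisely the choice that makes the subsequent absorption sharp and reproduces the constant $4$ in $F_h$. This gives
\[
\int_R^N h\,|U|^2\,dy \le \tfrac12\int_R^N h\,|U|^2\,dy + 2\int_R^N \frac{G^2}{h}\,|\partial_y U|^2\,dy .
\]
Since the truncated integral $\int_R^N h\,|U|^2\,dy$ is finite, I can absorb the first term on the right and conclude that $\int_R^N h\,|U|^2\,dy \le 4\int_R^N (G^2/h)\,|\partial_y U|^2\,dy \le \int_R^{+\infty}F_h\,|\partial_y U|^2\,dy$. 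Finally, letting $N\to+\infty$ and invoking monotone convergence on the left-hand side (the integrand $h\,|U|^2$ being nonnegative) yields the claimed inequality. The only genuinely delicate step is the endpoint at infinity, and the truncation together with the definite sign of the discarded boundary term disposes of it cleanly, with no additional hypotheses on the behaviour of $U$.
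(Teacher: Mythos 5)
Your proof is correct and follows essentially the same route as the paper's: both reduce to the intermediate bound $\int_R^{\infty}h\,|U|^{2}\,dy\le 2\int_R^{\infty}G\,|U|\,|\partial_y U|\,dy$ with $G(y)=\int_y^{\infty}h(\tau)\,d\tau$ (you via integration by parts against $G$, the paper via writing $|U(y)|^{2}=2\int_R^{y}U\,\partial_t U\,dt$ and swapping the order of integration), and then extract the constant $4$ by an optimally weighted splitting (your Young inequality with absorption versus the paper's Cauchy--Bunyakovsky--Schwarz followed by division by the common factor). If anything, your truncation to $[R,N]$ with the discarded nonpositive boundary term $-G(N)|U(N)|^{2}$ quietly repairs a small implicit assumption in the paper's argument, namely that $\int_R^{\infty}h\,|U|^{2}\,dy$ is finite when one divides through by it.
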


\begin{proof}
Using the Cauchy-Bunyakovsky-Schwarz inequality, we have
\begin{gather*}
\int_{R}^{+\infty }h(y)\left\vert U(y)\right\vert
^{2}\,dy=2\int_{R}^{+\infty }h(y)\int_{R}^{y}\partial _{t}U(t)U(t)dt\,dy\leq
\\
\leq 2\int_{R}^{+\infty }\int_{t}^{+\infty }h(y)\left\vert \partial
_{t}U(t)U(t)\right\vert \,dy\,dt\leq \\
2\left( \int_{R}^{+\infty }h(t)|U(t)|^{2}\,dt\right) ^{1/2}\left(
\int_{R}^{+\infty }h(t)^{-1}\left( \int_{t}^{+\infty }h(y)\,dy\right)
^{2}|\partial _{t}U(t)|^{2}\,dt\right) ^{1/2},
\end{gather*}%
and the result follows through division by a common factor.
\end{proof}

\begin{lemma}
\label{Lemma2}If $U$ is differentiable for $y\geq R$ and $U(R)=0$, then
\begin{equation*}
\int_{R}^{+\infty }h(y)\left\vert \partial _{y}U(y)\right\vert ^{2}\,dy\geq
\int_{R}^{+\infty }G_{h}(y)\left\vert U(y)\right\vert ^{2}\,dy,
\end{equation*}%
where $G_{h,R}(y)=\frac{1}{4h(y)}\left( \int_{R}^{y}h(\tau )^{-1}\,d\tau
\right) ^{-2}$.
\end{lemma}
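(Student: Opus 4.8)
The plan is to treat this as a Hardy-type inequality dual to Lemma~\ref{Lemma1}, exploiting the fact that the weight $G_{h,R}$ is itself a total derivative. Writing $\Phi(y)=\int_R^y h(\tau)^{-1}\,d\tau$, so that $\Phi(R)=0$ and $\Phi'(y)=1/h(y)$, I observe that
\[
G_{h,R}(y)=\frac{1}{4h(y)\Phi(y)^2}=\frac14\,\frac{\Phi'(y)}{\Phi(y)^2}=\frac14\frac{d}{dy}\!\left(-\frac{1}{\Phi(y)}\right).
\]
This identity is the crux: it turns the weighted integral of $|U|^2$ on the right-hand side into something that can be integrated by parts against $U^2$, producing a cross term $UU'$ that the Cauchy--Schwarz inequality will pair against the left-hand side.

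First I would fix a finite interval $[R,L]$, postponing the limit $L\to+\infty$, and integrate by parts:
\[
\int_R^L G_{h,R}(y)\,|U(y)|^2\,dy
=\left[-\frac{|U(y)|^2}{4\Phi(y)}\right]_R^L+\frac12\int_R^L\frac{U(y)U'(y)}{\Phi(y)}\,dy.
\]
I would then dispose of the boundary terms. At the lower end, since $U(R)=0$ and $U$ is differentiable one has $U(y)=O(y-R)$ while $\Phi(y)\sim (y-R)/h(R)$, so $|U(y)|^2/\Phi(y)\to0$ as $y\to R^+$ and the $R$-endpoint contributes nothing; the same estimate shows $G_{h,R}|U|^2$ stays bounded near $R$, so no integrability issue arises there. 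At the upper end the term $-|U(L)|^2/(4\Phi(L))$ is nonpositive, so it can only decrease the left-hand side, which is precisely the direction I need.

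Having removed the boundary contributions, I would apply the Cauchy--Bunyakovsky--Schwarz inequality after splitting the weight symmetrically as $\Phi^{-1}UU'=(h^{-1/2}\Phi^{-1}U)(h^{1/2}U')$:
\[
\frac12\int_R^L\frac{U U'}{\Phi}\,dy
\le\frac12\left(\int_R^L\frac{|U|^2}{h\Phi^2}\,dy\right)^{1/2}\!\left(\int_R^L h\,|U'|^2\,dy\right)^{1/2}.
\]
Recognizing $\int_R^L |U|^2/(h\Phi^2)=4\int_R^L G_{h,R}|U|^2$, and abbreviating the latter as $I_L$ and $\int_R^L h|U'|^2$ as $J_L$, the last two displays combine into $I_L\le I_L^{1/2}J_L^{1/2}$. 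Dividing by $I_L^{1/2}$ (the case $I_L=0$ being trivial) yields $I_L\le J_L\le\int_R^{+\infty}h|U'|^2\,dy$, and letting $L\to+\infty$ by monotone convergence gives the claim.

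The step I expect to need the most care is the justification of the limiting procedure together with the boundary manipulations. Assuming, as one may, that $\int_R^{+\infty}h|U'|^2<\infty$ (otherwise the inequality is vacuous), the finite-interval argument keeps every integral finite and the division by $I_L^{1/2}$ legitimate. Because $h$ decreases to $0$, the integrand $h^{-1}$ does not tend to $0$ and hence $\Phi(L)\to+\infty$; this is exactly why the upper boundary term carries the favorable sign, and why working on $[R,L]$ and then sending $L\to+\infty$ is the clean route rather than integrating directly to $+\infty$.
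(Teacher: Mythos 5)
Your proof is correct, but it takes a genuinely different route from the paper's. The paper proves Lemma~\ref{Lemma2} by quoting the classical Hardy inequality $\int_0^{+\infty}t^{-2}|v|^2\,dt\le 4\int_0^{+\infty}|v'|^2\,dt$ for $v(0)=0$ and transplanting it via the change of variables $t=\Phi(y)=\int_R^y h(\tau)^{-1}\,d\tau$, $v(t)=U(y)$, so that $\partial_t v=h\,\partial_y U$ and the two weighted integrals transform exactly into the two sides of the claimed inequality. You instead reprove the Hardy inequality from scratch in the weighted variables: the identity $G_{h,R}=\frac14\frac{d}{dy}(-\Phi^{-1})$, integration by parts on a truncated interval $[R,L]$, the sign of the upper boundary term, Cauchy--Schwarz with the symmetric splitting $h^{-1/2}\Phi^{-1}U\cdot h^{1/2}U'$, and the absorption $I_L\le I_L^{1/2}J_L^{1/2}$. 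What your approach buys is self-containedness and explicit control of the analytic fine print that the paper's two-line substitution glosses over: finiteness of $I_L$ near $y=R$ (your $U=O(y-R)$, $\Phi\sim(y-R)/h(R)$ estimate), legitimacy of the division by $I_L^{1/2}$, and the monotone-convergence passage $L\to+\infty$; it is also structurally parallel to the paper's own proof of Lemma~\ref{Lemma1}, so the two lemmas get a uniform treatment. What the paper's route buys is brevity and a transparent explanation of the otherwise mysterious weight $G_{h,R}$ as the pullback of $t^{-2}/4$. One small quibble: your closing remark attributes the favorable sign of the upper boundary term to $\Phi(L)\to+\infty$; in fact $-|U(L)|^2/(4\Phi(L))\le 0$ simply because $\Phi>0$, and the divergence of $\Phi$ (which does hold, since $h\to0$ under Assumption~\ref{ass2}) is never needed once you truncate at $L$ --- it would only matter if you integrated by parts directly on $[R,+\infty)$ and had to kill the boundary term at infinity, which your truncation deliberately avoids.
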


\begin{proof}
For functions $v$ with $v\left( 0\right) =0$ a Hardy inequality tells us
that
\begin{equation*}
\int_{0}^{+\infty }t^{-2}\left\vert v(t)\right\vert ^{2}\,dt\leq
4\int_{0}^{+\infty }\left\vert \partial _{t}v(t)\right\vert ^{2}\,dt.
\end{equation*}%
We make the change $t\mapsto y\in \lbrack R,+\infty )$ where $%
t=\int_{R}^{y}h(\tau )^{-1}\,d\tau $, and set $U(y)=v(t)$. Then $\partial
_{t}v(t)=h(y)\partial _{y}U(y)$ leads to the desired estimate.
\end{proof}

\begin{corollary}
\label{Corollary1}If the function $U$ is twice differentiable for $y\geq R$
and $U(R)=U^{\prime }(R)=0$, then
\begin{equation}
\int_{R}^{+\infty }h(t)\left\vert \partial _{t}^{2}U(t)\right\vert
^{2}\,dt\geq W_{h}(R)\int_{R}^{+\infty }h(t)\left\vert U(t)\right\vert
^{2}\,dt  \label{one-dim-ineq}
\end{equation}%
is valid with
\begin{equation}
W_{h}(R):=\inf_{t\in \lbrack R,+\infty )}\frac{G_{h,R}(t)}{F_{h}(t)}%
=\inf_{t\in \lbrack R,+\infty )}\frac{1}{16}\left( \int_{R}^{t}h(\tau
)^{-1}\,d\tau \right) ^{\hspace{-1mm}-2}\hspace{-1mm}\left( \int_{t}^{\infty
}h(\tau )\,d\tau \right) ^{\hspace{-1mm}-2}\hspace{-3mm}.  \label{NW}
\end{equation}
\end{corollary}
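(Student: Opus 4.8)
The plan is to obtain \eqref{one-dim-ineq} by chaining Lemmas \ref{Lemma1} and \ref{Lemma2}, using the first derivative $\partial_y U$ as the intermediary. The key observation is that the two lemmas pull in opposite directions: Lemma \ref{Lemma2} bounds $\int h\,|\partial_y V|^2$ below by $\int G_{h,R}\,|V|^2$, while Lemma \ref{Lemma1} bounds $\int F_h\,|\partial_y U|^2$ above by $\int h\,|U|^2$. If I apply the former with $V=\partial_y U$ and the latter to $U$ itself, the intermediate quantity $\int(\cdot)\,|\partial_y U|^2$ appears on both sides, and the only thing left is to reconcile the weights $G_{h,R}$ and $F_h$ on $|\partial_y U|^2$. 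That reconciliation is exactly what the constant $W_h(R)$ in \eqref{NW} is designed to do.

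Concretely, I would first apply Lemma \ref{Lemma2} to the function $V=\partial_y U$, which is legitimate since the hypothesis $U'(R)=0$ gives $V(R)=0$. This yields
\begin{equation*}
\int_{R}^{+\infty }h(t)\left\vert \partial_t^2 U(t)\right\vert^2\,dt
=\int_{R}^{+\infty }h(t)\left\vert \partial_t V(t)\right\vert^2\,dt
\geq \int_{R}^{+\infty }G_{h,R}(t)\left\vert \partial_t U(t)\right\vert^2\,dt.
\end{equation*}
Next I would rewrite the integrand on the right as $\tfrac{G_{h,R}(t)}{F_h(t)}\,F_h(t)\,|\partial_t U(t)|^2$ and invoke the defining infimum $\tfrac{G_{h,R}(t)}{F_h(t)}\geq W_h(R)$ for every $t\in[R,+\infty)$, so that
\begin{equation*}
\int_{R}^{+\infty }G_{h,R}(t)\left\vert \partial_t U(t)\right\vert^2\,dt
\geq W_h(R)\int_{R}^{+\infty }F_h(t)\left\vert \partial_t U(t)\right\vert^2\,dt
\geq W_h(R)\int_{R}^{+\infty }h(t)\left\vert U(t)\right\vert^2\,dt,
\end{equation*}
where the last step is Lemma \ref{Lemma1} applied to $U$ (again admissible because $U(R)=0$). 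Concatenating these two displays gives \eqref{one-dim-ineq} directly.

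I do not expect a genuine obstacle here, since the argument is a two-step telescoping of the auxiliary lemmas. The one piece of bookkeeping worth checking is that the pointwise quotient indeed collapses to the stated form: $\tfrac{G_{h,R}(t)}{F_h(t)} = \big(\tfrac{1}{4h(t)}(\int_R^t h^{-1})^{-2}\big)\big/\big(\tfrac{4}{h(t)}(\int_t^\infty h)^2\big) = \tfrac{1}{16}(\int_R^t h^{-1})^{-2}(\int_t^\infty h)^{-2}$, matching \eqref{NW}; the factors $h(t)$ cancel, which is precisely why taking the quotient of the two weights is the natural move. The only analytic point to keep in mind is that Assumption \ref{ass2} (integrability of $h$) guarantees $F_h$ is finite, so the intermediate integral is meaningful; whether the resulting $W_h(R)$ is strictly positive is a separate question about the weight $h$ and is not needed for the inequality itself.
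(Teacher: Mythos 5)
Your proof is correct and is precisely the argument the paper intends: the Corollary carries no written proof because it is exactly this chaining of Lemma \ref{Lemma2} (applied to $V=\partial_y U$, using $U'(R)=0$) with the pointwise bound $G_{h,R}(t)\geq W_h(R)\,F_h(t)$ and Lemma \ref{Lemma1} (applied to $U$, using $U(R)=0$). Your bookkeeping check that the factors $h(t)$ cancel in $G_{h,R}/F_h$, yielding the constant $\tfrac{1}{16}$ in \eqref{NW}, matches the paper's definition of $W_h(R)$.
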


The following lemma will show that the assumptions in Theorem \ref{Theorem2}
can be met.

\begin{lemma}
\label{tech3}We have%
\begin{equation}
\inf_{t\in \lbrack R,+\infty )}\frac{G_{h^{3},R}(t)}{h\left( t\right)
h^{\prime }(t)^{2}}\geq 1.  \label{logres}
\end{equation}%
Suppose moreover that%
\begin{equation}
\lim_{t\rightarrow +\infty }\partial _{t}\left( \log \left( h\left( t\right)
\right) \right) =-\infty .  \label{log}
\end{equation}%
Then
\begin{equation*}
W_{h}(R)\rightarrow +\infty \text{ and }W_{h^{3}}(R)\rightarrow +\infty
\text{ for }R\rightarrow \infty .
\end{equation*}
\end{lemma}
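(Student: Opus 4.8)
The plan is to treat the two assertions separately, since the estimate \eqref{logres} holds under Assumption~\ref{ass2} alone, whereas the divergence of $W_h$ and $W_{h^3}$ needs the superexponential decay \eqref{log}. Throughout I take $R\geq T$, so that $h'<0$ and $h''>0$ on $[R,+\infty)$.

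For \eqref{logres} I would first unfold the definition of $G_{h^{3},R}$ from Lemma~\ref{Lemma2}, namely $G_{h^{3},R}(t)=\frac{1}{4h(t)^{3}}\bigl(\int_{R}^{t}h(\tau)^{-3}\,d\tau\bigr)^{-2}$, so that the claimed bound $G_{h^{3},R}(t)\geq h(t)h'(t)^{2}$ is equivalent to the pointwise inequality
\[
\int_{R}^{t}h(\tau)^{-3}\,d\tau\leq\frac{1}{2\,h(t)^{2}\,|h'(t)|},\qquad t\geq R.
\]
I would prove this by a comparison argument: set $\phi(t)=\int_{R}^{t}h^{-3}$ and $\psi(t)=-\tfrac12 h(t)^{-2}h'(t)^{-1}$. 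Then $\phi(R)=0<\psi(R)$, while differentiating gives $\phi'(t)=h(t)^{-3}$ and $\psi'(t)=h(t)^{-3}+\tfrac12 h(t)^{-2}h'(t)^{-2}h''(t)$. Because $h''>0$ the extra term is nonnegative, hence $\psi'\geq\phi'$ on $[R,+\infty)$; integrating from $R$ gives $\phi\leq\psi$. Squaring and reinserting into $G_{h^{3},R}$ (using $\psi^{-2}=4h^{4}(h')^{2}$) returns $G_{h^{3},R}(t)\geq h(t)h'(t)^{2}$, i.e.\ the ratio is $\geq1$. The only thing to watch is the sign bookkeeping ($h'<0$) and the restriction $R\geq T$.

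For the two limits I would isolate the common mechanism in a single claim: if $g>0$ is decreasing with $\int_{0}^{\infty}g<\infty$ and $(\log g)'(t)\to-\infty$, then $W_{g}(R)\to+\infty$. Given $M$, choose $T_{M}$ with $(\log g)'\leq-M$ on $[T_{M},+\infty)$. Integrating this bound yields, for $R\geq T_{M}$ and $t\geq R$, both $g(\tau)\leq g(t)e^{-M(\tau-t)}$ for $\tau\geq t$ and $g(\tau)^{-1}\leq g(t)^{-1}e^{-M(t-\tau)}$ for $R\leq\tau\leq t$. Integrating these in turn gives $\int_{t}^{\infty}g\leq g(t)/M$ and $\int_{R}^{t}g^{-1}\leq 1/(M\,g(t))$, so that the product $\bigl(\int_{R}^{t}g^{-1}\bigr)\bigl(\int_{t}^{\infty}g\bigr)\leq 1/M^{2}$ uniformly in $t\geq R$. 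Since $W_{g}(R)=\inf_{t\geq R}\tfrac1{16}\bigl(\int_{R}^{t}g^{-1}\bigr)^{-2}\bigl(\int_{t}^{\infty}g\bigr)^{-2}$, this forces $W_{g}(R)\geq M^{4}/16$ whenever $R\geq T_{M}$, and letting $M\to\infty$ proves the claim.

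To finish I would apply the claim twice. Taking $g=h$ gives $W_{h}(R)\to+\infty$ directly. For $W_{h^{3}}$ I note that $g=h^{3}$ again satisfies the hypotheses: it is positive and decreasing, $\int h^{3}\leq(\sup h)^{2}\int h<\infty$, and $(\log h^{3})'=3(\log h)'\to-\infty$; moreover $W_{h^{3}}(R)=\inf_{t}\tfrac1{16}\bigl(\int_{R}^{t}h^{-3}\bigr)^{-2}\bigl(\int_{t}^{\infty}h^{3}\bigr)^{-2}$ is precisely $W_{g}(R)$ for this $g$, so the same argument delivers $W_{h^{3}}(R)\to+\infty$. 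I expect the main obstacle to be this second part: the naive estimates $\int_{R}^{t}g^{-1}\leq(t-R)g(t)^{-1}$ and $\int_{t}^{\infty}g\leq\text{const}\cdot g(t)$ are useless because the length $t-R$ is unbounded, so the real content is to extract the product bound $1/M^{2}$ that is \emph{uniform} in $t$ from the exponential tail estimates; securing that uniformity, rather than a merely pointwise-in-$t$ bound, is the crux.
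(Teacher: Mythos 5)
Your proof is correct. For \eqref{logres} you and the paper do essentially the same thing: the paper inserts the monotonicity $-h'(\tau)\geq -h'(t)>0$ (a consequence of $h''>0$, with $R\geq T$ implicitly assumed, as you make explicit) into $\int_R^t h(\tau)^{-3}\,d\tau$ and evaluates $\int_R^t(-h'(\tau))h(\tau)^{-3}\,d\tau=\tfrac12 h(t)^{-2}-\tfrac12 h(R)^{-2}$ exactly; your comparison $\phi\leq\psi$ via $\psi'-\phi'=\tfrac12 h^{-2}(h')^{-2}h''\geq 0$ is the same inequality in differential-comparison packaging. For the divergence of $W_h$ and $W_{h^3}$ you take a genuinely different route. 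The paper argues qualitatively: the quotient $G_{h,R}(t)/F_h(t)$ blows up both as $t\downarrow R$ and as $t\to\infty$, hence attains an interior minimum at some $t_R\in(R,\infty)$; the stationarity condition $h(t)^{-1}\int_t^\infty h(\tau)\,d\tau=h(t)\int_R^t h(\tau)^{-1}\,d\tau$ then lets them rewrite the infimum as $h(t_R)^2\bigl(\int_{t_R}^\infty h(\tau)\,d\tau\bigr)^{-2}$, which diverges since $t_R>R$, and they dispatch $W_{h^3}$ with ``the same derivation holds true.'' You instead convert $(\log g)'\leq -M$ on $[T_M,\infty)$ into the two exponential tail bounds $\int_t^\infty g(\tau)\,d\tau\leq g(t)/M$ and $\int_R^t g(\tau)^{-1}\,d\tau\leq 1/(Mg(t))$, whose product is $\leq M^{-2}$ \emph{uniformly} in $t\geq R$, yielding $W_g(R)\geq M^4/16$ for all $R\geq T_M$. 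Your approach buys three things: an explicit quantitative rate rather than bare divergence; no need to establish existence of an interior minimizer or differentiate the quotient (the paper's critical-point step tacitly assumes the minimum occurs at a stationary point); and a single claim applied verbatim to $g=h$ and $g=h^3$, since $(\log h^3)'=3(\log h)'\to-\infty$. Your closing diagnosis is also accurate: the uniformity in $t$ of the product bound is precisely what the infimum in \eqref{NW} demands, and your exponential estimates supply it where the naive bounds would fail.
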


\begin{proof}
Since $-h^{\prime }\left( \tau \right) \geq -h^{\prime }\left( t\right) >0$
for $\tau <t$, we find
\begin{gather*}
\frac{G_{h^{3},R}(t)}{h\left( t\right) h^{\prime }(t)^{2}}=\frac{1}{%
4h^{\prime }(t)^{2}h(t)^{4}}\left( \int_{R}^{t}h(\tau )^{-3}\,d\tau \right)
^{-2}\geq \\
\geq \frac{1}{4h(t)^{4}}\left( -\int_{R}^{t}h^{\prime }\left( \tau \right)
h(\tau )^{-3}\,d\tau \right) ^{-2}=\frac{1}{4h(t)^{4}}\left( \frac{1}{%
2h\left( t\right) ^{2}}-\frac{1}{2h\left( R\right) ^{2}}\right) ^{-2}\geq 1.
\end{gather*}%
Since (\ref{log}) equals $h^{\prime }\left( t\right) /h\left( t\right)
\rightarrow -\infty $ for $t\rightarrow \infty $, we find that for $%
t\rightarrow \infty $ both%
\begin{equation*}
h\left( t\right) \left( \int_{t}^{\infty }h(\tau )\,d\tau \right)
^{-1}\rightarrow \infty \text{ and }\frac{1}{h\left( t\right) }\left(
\int_{R}^{t}\frac{1}{h(\tau )}d\tau \right) ^{-1}\rightarrow \infty .
\end{equation*}%
Hence $G_{h,R}(t)/F_{h}(t)\rightarrow \infty $ for $t\rightarrow \infty $
and since the quotient also goes to infinity for $t\downarrow R$, it has a
minimum in some $t_{R}\in \left( R,\infty \right) $. Calculating $\left(
G_{h,R}(t)/F_{h}(t)\right) ^{\prime }=0$ we find
\begin{equation*}
\frac{1}{h\left( t\right) }\int_{t}^{\infty }h(\tau )\,d\tau -h\left(
t\right) \int_{R}^{t}h(\tau )^{-1}\,d\tau =0.
\end{equation*}%
Hence
\begin{equation*}
\inf_{t\in \left[ R,\infty \right) }\left( \int_{R}^{t}h(\tau )^{-1}\,d\tau
\ \int_{t}^{\infty }h(\tau )\,d\tau \right) ^{-1}=h\left( t_{R}\right)
^{2}\left( \int_{t_{R}}^{\infty }h(\tau )\,d\tau \right) ^{-2},
\end{equation*}%
which goes to infinity for $R\rightarrow \infty $\ since $t_{R}>R$. The
claim for $W_{h}(R)$ follows. The same derivation holds true for $%
W_{h^{3}}(R)$.
\end{proof}

\section{The traction-free boundary and a 2d-estimate}
We assume that the Neumann boundary conditions \eqref{0.6} are imposed at
the both sides of the peak \eqref{0.1}. While $\rho\to +\infty$ let us
describe the behavior of multiplier $K(\rho)$ in the inequality
\begin{equation}
K(\rho)\int_{\Pi _{\rho}}|u(y,z)|^{2}\,dy\leq \Vert u;{H}^{2}(\Omega )\Vert
^{2}\,,\quad u\in H^{2}(\Omega ).  \label{K}
\end{equation}%
If $K(\rho)$ increases unboundedly as $\rho\rightarrow +\infty $ then, as
above, Theorem 10.1.5 \cite{BiSo} ensures that the spectrum of the problem (%
\ref{0.4}--\ref{0.7}) stays discrete even in the case both sides of the peak
are supplied with the traction-free boundary conditions (N) and, moreover,
for any other boundary conditions from the list (\ref{0.5}--\ref{0.7}).

\begin{proposition}
Suppose that Assumption \ref{ass2} is satisfied and that 
\begin{equation*}
\lim\limits_{t\rightarrow \infty }\partial _{t}\left( \log \left( H\left(
t\right) \right) \right) =-\infty.
\end{equation*} 
Then for $\rho $ sufficiently large \eqref{K} holds true with%
\begin{equation*}
K\left( \rho \right) =c\min \{H^{-4}(\rho ),W_{H}(\rho ),W_{H^{3}}(\rho )\}.
\end{equation*}
\end{proposition}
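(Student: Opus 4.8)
The plan is to reduce the two-dimensional estimate \eqref{K} to the one-dimensional inequalities of Section~4 by splitting $u$ on each cross-section $\Upsilon (y)=(-H(y),H(y))$ into its part that is affine in $z$ and an orthogonal remainder. For $y\geq \rho$ I would write $u(y,z)=a(y)+b(y)z+u^{\perp }(y,z)$, where $a$ and $b$ are chosen so that $u^{\perp }(y,\cdot )$ is $L_{2}(\Upsilon (y))$-orthogonal to both $1$ and $z$; explicitly $a(y)=\frac{1}{2H(y)}\int_{\Upsilon (y)}u(y,z)\,dz$ and $b(y)=\frac{3}{2H(y)^{3}}\int_{\Upsilon (y)}z\,u(y,z)\,dz$. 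Since the three pieces are mutually orthogonal on each slice, the cross terms drop out and $\int_{\Pi _{\rho }}|u|^{2}\,dx=\int_{\rho }^{\infty }(2H(y)|a(y)|^{2}+\tfrac{2}{3}H(y)^{3}|b(y)|^{2})\,dy+\int_{\Pi _{\rho }}|u^{\perp }|^{2}\,dx$. It then suffices to bound the three contributions separately by a constant times $\Vert u;H^{2}(\Omega )\Vert ^{2}$, with the respective factors $H^{4}(\rho )$, $W_{H}(\rho )^{-1}$ and $W_{H^{3}}(\rho )^{-1}$; note that the weights $H$ and $H^{3}$ match exactly the $L_{2}$-mass $2H$ and $\tfrac{2}{3}H^{3}$ of the constant and linear modes across a slice.

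For the remainder $u^{\perp }$ the affine functions span the kernel of $\partial _{z}^{2}$, so a second-order Friedrich inequality of the type \eqref{0.15}, now valid precisely because $u^{\perp }(y,\cdot )$ is orthogonal to that kernel, gives $\int_{\Upsilon (y)}|u^{\perp }(y,z)|^{2}\,dz\leq cH(y)^{4}\int_{\Upsilon (y)}|\partial _{z}^{2}u^{\perp }(y,z)|^{2}\,dz$ (rescale $z=H(y)s$ to the unit interval, where the constant is the inverse first nonzero eigenvalue of $\partial _{s}^{4}$ on functions orthogonal to $\{1,s\}$). Since $\partial _{z}^{2}u^{\perp }=\partial _{z}^{2}u$ and $H$ is decreasing, integrating in $y$ over $(\rho ,\infty )$ yields $\int_{\Pi _{\rho }}|u^{\perp }|^{2}\,dx\leq cH(\rho )^{4}\int_{\Pi _{\rho }}|\partial _{z}^{2}u|^{2}\,dx\leq cH(\rho )^{4}\Vert u;H^{2}(\Omega )\Vert ^{2}$, which produces the $H^{-4}(\rho )$ term of $K(\rho )$.

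The two remaining integrals are purely one-dimensional and fit the setting of Corollary \ref{Corollary1}. Applying it with weight $h=H$ to $U=a$ and with weight $h=H^{3}$ to $U=b$ gives $\int_{\rho }^{\infty }H|a|^{2}\,dy\leq W_{H}(\rho )^{-1}\int_{\rho }^{\infty }H|\partial _{y}^{2}a|^{2}\,dy$ and $\int_{\rho }^{\infty }H^{3}|b|^{2}\,dy\leq W_{H^{3}}(\rho )^{-1}\int_{\rho }^{\infty }H^{3}|\partial _{y}^{2}b|^{2}\,dy$. It then remains to show the two right-hand sides are dominated by $c\Vert u;H^{2}(\Omega )\Vert ^{2}$. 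That $W_{H}(\rho )$ and $W_{H^{3}}(\rho )$ genuinely tend to $+\infty $, so that $K(\rho )$ blows up, is exactly what Lemma \ref{tech3} provides under the stated hypothesis $\partial _{t}\log H(t)\to -\infty $.

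The delicate point, which I expect to be the main obstacle, is this last reduction of $\int_{\rho }^{\infty }H|\partial _{y}^{2}a|^{2}\,dy$ (resp.\ $\int_{\rho }^{\infty }H^{3}|\partial _{y}^{2}b|^{2}\,dy$) to genuine second derivatives of $u$, together with the vanishing conditions $a(\rho )=a^{\prime }(\rho )=0$ (and likewise for $b$) required by Corollary \ref{Corollary1}. The averaging operator does not commute with $\partial _{y}$, because both the prefactor $1/H(y)$ and the endpoints $\pm H(y)$ of $\Upsilon (y)$ depend on $y$; differentiating twice therefore generates commutator terms carrying $H^{\prime }$, $H^{\prime \prime }$ and the boundary traces $u(y,\pm H(y))$, $\partial _{n}u(y,\pm H(y))$. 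These are controllable since $H^{\prime }\to 0$ and $H^{\prime \prime }$ is bounded, while the traces on $\Sigma _{\rho }^{\pm }$ are estimated through the $H^{2}$-norm. To enforce the vanishing at $y=\rho $ I would insert a smooth cut-off equal to $0$ near $y=\rho $ and $1$ further out and apply Corollary \ref{Corollary1} to the cut-off averages, the extra lower-order contributions being absorbed into the part of $\Vert u;H^{2}(\Omega )\Vert ^{2}$ carried by the bounded region and a fixed transition layer. Collecting the three estimates yields $\int_{\Pi _{\rho }}|u|^{2}\,dx\leq c(H^{4}(\rho )+W_{H}(\rho )^{-1}+W_{H^{3}}(\rho )^{-1})\Vert u;H^{2}(\Omega )\Vert ^{2}$, which is \eqref{K} with $K(\rho )=c\min \{H^{-4}(\rho ),W_{H}(\rho ),W_{H^{3}}(\rho )\}$.
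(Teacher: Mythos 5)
Your skeleton coincides with the paper's: a slice-wise decomposition of $u$ into a constant mode, a linear mode and a remainder, a Friedrichs-type inequality with constant $cH(y)^{-4}$ for the remainder, Corollary \ref{Corollary1} with weights $H$ and $H^{3}$ for the two one-dimensional modes, and Lemma \ref{tech3} to guarantee $W_{H}(\rho),W_{H^{3}}(\rho)\rightarrow\infty$. Your only structural deviations are the second orthogonality condition (you take $u^{\perp}\perp z$ in $L_{2}(\Upsilon(y))$, whereas the paper imposes $\int_{\Upsilon(y)}\partial_{z}u^{\perp}\,dz=0$, cf. \eqref{ortog}, so that its $u_{1}$ is the trace difference $(u(y,H(y))-u(y,-H(y)))/(2H(y))$) and the cut-off at $y=\rho$ in place of the paper's reduction to smooth functions vanishing for $y<\rho$. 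The genuine gap is that the step you yourself call the main obstacle is precisely where the paper's proof lives, and your one-sentence disposal of it --- the commutator terms ``are controllable since $H'\to 0$ and $H''$ is bounded'' --- is false if taken term by term. Differentiating the prefactors $H^{-1}$ and $H^{-3}$ in $a$ and $b$ produces terms carrying the factor $H'/H$, which under your standing hypothesis $\partial_{t}\log H\to-\infty$ blows up: for instance $\partial_{y}b$ contains $-3(H'/H)\,b$, so a term-by-term estimate of $\int H^{3}|\partial_{y}^{2}b|^{2}\,dy$ produces $\int H^{3}(H'/H)^{4}|b|^{2}\,dy$, which is not dominated by $c\Vert u;H^{2}(\Omega)\Vert^{2}$. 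These divergent contributions cancel only against the boundary terms generated by the moving endpoints $\pm H(y)$ --- equivalently, one must differentiate the orthogonality conditions along the peak, which is exactly what the paper does --- after which the residue consists of traces $u^{\perp}(y,\pm H(y))$, $\partial_{z}u^{\perp}(y,\pm H(y))$, $\partial_{y}u^{\perp}(y,\pm H(y))$, to be converted by trace inequalities on $\Upsilon(y)$ of the type $|u^{\perp}(y,\pm H(y))|^{2}\leq cH(y)^{3}\int_{\Upsilon(y)}|\partial_{z}^{2}u^{\perp}|^{2}\,dz$ into the good quantities $\int|\partial_{z}^{2}u^{\perp}|^{2}$ and $\int|\partial_{y}\partial_{z}u^{\perp}|^{2}$, with the small prefactors $\sup_{y\geq\rho}\bigl(|H'|^{2}+H|H''|\bigr)$ and $\sup_{y\geq\rho}|H'|$. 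This cancellation-and-absorption scheme is the bulk of the paper's argument (its estimates of the cross terms $J_{3}$, $J_{4}$ and $K_{3}$), not a routine remark.

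A second omission: in the paper the hypothesis \eqref{log} is used twice --- once through Lemma \ref{tech3} to make $W_{H}(\rho)$ and $W_{H^{3}}(\rho)$ tend to infinity, which you do use, and once through the inequality \eqref{logres}, which is indispensable for controlling the mixed cross term ($K_{3}$, the interaction of $\partial_{y}u_{1}$ with $\partial_{y}\partial_{z}u^{\perp}$): there the weight $G_{H^{3},\rho}$ must compensate the trace factor $|H'|^{2}H$, and it does so precisely because of \eqref{logres}. Your sketch never invokes \eqref{logres} and offers no substitute at the corresponding point; whether your $L_{2}$-orthogonal variant can sidestep this term would itself require proof. In short, the proposal identifies the right architecture but leaves unproven exactly the estimates that constitute the proof, and the reason it offers for believing them overlooks the fact that the hypothesis making the theorem work simultaneously makes the naive commutator bounds diverge.
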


\begin{proof}
It is sufficient to check the inequality \eqref{K} for smooth functions
which vanish for $y<\rho $. We use the representation
\begin{equation*}
u(x)=u(y,z)=u_{0}(y)+zu_{1}(y)+u^{\perp }(y,z)
\end{equation*}%
where, for $y>\rho $, the component $u^{\perp }$ is subject to the following
two orthogonality conditions
\begin{equation}
\int_{\Upsilon (y)}\hspace*{-0.4cm}u^{\perp }(y,z)\,dz=0\text{ and }%
\int_{\Upsilon (y)}\hspace*{-0.4cm}\partial _{z}u^{\perp }(y,z)\,dz=u^{\perp
}(y,H(y))-u^{\perp }(y,-H(y))=0.  \label{ortog}
\end{equation}

Let us process the integrals on the right-hand side of
\begin{gather}
\int_{\Pi _{\rho }}\left\vert \nabla _{x}^{2}u(x)\right\vert
^{2}\,dx=I_{1}+4I_{2}+I_{3},\text{ where}  \label{star} \\
\begin{array}{ccc}
I_{1}:=\displaystyle\int_{\Pi _{\rho }}\left\vert \partial
_{z}^{2}u(x)\right\vert ^{2}dx, & I_{2}:=\displaystyle\int_{\Pi _{\rho
}}\left\vert \partial _{y}\partial _{z}u(x)\right\vert ^{2}dx, & I_{3}:=%
\displaystyle\int_{\Pi _{\rho }}\left\vert \partial _{y}^{2}u(x)\right\vert
^{2}dx.%
\end{array}
\notag
\end{gather}%
Since $I_{1}=\int_{\rho }^{+\infty }\int_{\Upsilon (y)}\left\vert \partial
_{z}^{2}u^{\perp }(y,z)\right\vert ^{2}\,dz\,dy$, since by the orthogonality
conditions in \eqref{ortog} also here inequality \eqref{0.15} holds, we find
\begin{equation}
I_{1}\geq c\int_{\Pi _{\rho }}H(y)^{-4}\left\vert u^{\perp }(x)\right\vert
^{2}\,dx.  \label{I1est}
\end{equation}%
For the last term in \eqref{star} we have
\begin{gather}
I_{3}=\int_{\Pi _{\rho }}\left\vert \partial _{y}^{2}u_{0}(y)+z\partial
_{y}^{2}u_{1}(y)+\partial _{y}^{2}u^{\perp }(y,z)\right\vert ^{2}\,dx\geq
J_{1}+J_{2}+2J_{3}+2J_{4},  \label{star3} \\
\begin{array}{cll}
\text{where} & J_{1}=\displaystyle\int_{\Pi _{\rho }}\left\vert \partial
_{y}^{2}u_{0}(y)\right\vert ^{2}dx, & J_{3}=\displaystyle\int_{\Pi _{\rho
}}\partial _{y}^{2}u_{0}(y)\partial _{y}^{2}u^{\perp }(y,z)dx, \\
& J_{2}=\displaystyle\int_{\Pi _{\rho }}\left\vert z\partial
_{y}^{2}u_{1}(y)\right\vert ^{2}dx, & J_{4}=\displaystyle\int_{\Pi _{\rho
}}z\partial _{y}^{2}u_{1}(y)\partial _{y}^{2}u^{\perp }(y,z)dx.%
\end{array}
\notag
\end{gather}%
We readily notice that according to the inequality \eqref{one-dim-ineq} the
estimates%
\begin{equation}
J_{1}\geq W_{H}(\rho )\int_{\rho }^{+\infty }2H(y)\left\vert
u_{0}(y)\right\vert ^{2}\,dy=W_{H}(\rho )\int_{\Pi _{\rho }}\left\vert
u_{0}(y)\right\vert ^{2}\,dx,  \label{J1est}
\end{equation}%
\begin{gather}
J_{2}=\tfrac{2}{3}\int_{\rho }^{+\infty }H^{3}(y)\left\vert \partial
_{y}^{2}u_{1}(y)\right\vert ^{2}\,dy\ \geq  \notag \\
\geq \tfrac{2}{3}\,W_{H^{3}}(\rho )\int_{\rho }^{+\infty }H^{3}(y)\left\vert
u_{1}(y)\right\vert ^{2}\,dy=W_{H^{3}}(\rho )\int_{\Pi _{\rho }}\left\vert
zu_{1}(y)\right\vert ^{2}\,dx  \label{J2est}
\end{gather}%
are fulfilled. For our purpose we need $W_{H}(\rho )\rightarrow +\infty $
and $W_{H^{3}}\left( \rho \right) \rightarrow +\infty $ for $\rho
\rightarrow \infty $ and this we will assume.

Besides, by the Cauchy-Bunyakovsky-Schwarz inequality, we have
\begin{equation*}
\left\vert J_{3}\right\vert \leq J_{1}^{1/2}\left( \int_{\rho }^{+\infty }%
\frac{1}{2H(y)}\left( \int_{\Upsilon (y)}\partial _{y}^{2}u^{\perp
}(y,z)\,dz\right) ^{2}\,dy\right) ^{1/2}\,.
\end{equation*}%
We now deal with the inner integral in $z$ in the last expression. To this
end, we take into account the orthogonality conditions \eqref{ortog} and the
trace inequality. We differentiate the first equality in \eqref{ortog} twice
with respect to $y$ and obtain
\begin{multline*}
\sum_{\pm }\Bigl(2\partial _{y}u^{\perp }(y,\pm H(y))\partial
_{y}H(y)+u^{\perp }(y,\pm H(y))\partial _{y}^{2}H(y)\ \pm \\
\pm \partial _{z}u^{\perp }(y,\pm H(y))\left( \partial _{y}H(y)\right) ^{2}%
\Bigr)+\int_{\Upsilon (y)}\partial _{y}^{2}u^{\perp }(y,z)\,dz=0.
\end{multline*}%
Thus,
\begin{multline*}
\left( \int_{\Upsilon (y)}\partial _{y}^{2}u^{\perp }(y,z)\,dz\right)
^{2}\leq c\sum_{\pm }\bigg(|\partial _{z}u^{\perp }(y,\pm
H(y))|^{2}|\partial _{y}H(y)|^{4}\ + \\
+\ |u^{\perp }(y,\pm H(y))|^{2}|\partial _{y}^{2}H(y)|^{2}+|\partial
_{y}u^{\perp }(y,\pm H(y))|^{2}|\partial _{y}H(y)|^{2}\bigg)\,.
\end{multline*}%
For the first two terms between the brackets we use the trace inequality
\begin{equation*}
\left\vert \partial _{z}u^{\perp }(y,\pm H(y))\right\vert
^{2}|H(y)|^{2}+\left\vert u^{\perp }(y,\pm H(y))\right\vert ^{2}\leq
c\left\vert H(y)\right\vert ^{3}\int_{\Upsilon (y)}\left\vert \partial
_{z}^{2}u^{\perp }(y,z)\right\vert ^{2}\,dz\,.
\end{equation*}%
For the third term, we write down the chain of inequalities
\begin{multline*}
\left\vert \partial _{y}u^{\perp }(y,\pm H(y))\right\vert ^{2}\leq
cH(y)\int_{\Upsilon (y)}\left\vert \partial _{y}\partial _{z}u^{\perp
}(y,z)\right\vert ^{2}\,dz\ + \\
+\ c|H(y)|^{-2}\left( \int_{\Upsilon (y)}\partial _{y}u^{\perp
}(y,z)\,dz\right) ^{2}\leq cH(y)\int_{\Upsilon (y)}\left\vert \partial
_{y}\partial _{z}u^{\perp }(y,z)\right\vert ^{2}\,dz\ + \\
+\ 2c\left( \frac{\partial _{y}H(y)}{H(y)}\right) ^{2}\left( \left\vert
u^{\perp }(y,H(y))\right\vert ^{2}+\left\vert u^{\perp }(y,-H(y))\right\vert
^{2}\right) \,.
\end{multline*}%
As a result, we find that
\begin{multline*}
\left\vert \int_{\Upsilon (y)}\partial _{y}^{2}u^{\perp
}(y,z)\,dz\right\vert ^{2}\leq c\left( |\partial _{y}H(y)|^{4}H(y)+|\partial
_{y}^{2}H(y)|^{2}|H(y)|^{3}\right) \times \\
\times \int_{\Upsilon (y)}\left\vert \partial _{z}^{2}u^{\perp
}(y,z)\right\vert ^{2}\,dz+c\left\vert \partial _{y}H(y)\right\vert
^{2}\left\vert H(y)\right\vert \int_{\Upsilon (y)}\left\vert \partial
_{y}\partial _{z}u^{\perp }(y,z)\right\vert ^{2}\,dz\,.
\end{multline*}

The final inequality for the integral $J_{3}$ takes the form%
\begin{equation}
\left\vert J_{3}\right\vert \leq c_{1}\left( \rho \right)
J_{1}^{1/2}I_{1}^{1/2}+c_{2}\left( \rho \right) J_{1}^{1/2}K_{1}^{1/2}
\label{J3est}
\end{equation}
where $K_{1}=\left\Vert \partial _{yz}^{2}u^{\perp };L_{2}(\Pi _{\rho
})\right\Vert ^{2}$ and%
\begin{eqnarray*}
c_{1}\left( \rho \right) &=&c\sup_{y\in \lbrack \rho ,+\infty )}\left(
\left\vert \partial _{y}H(y)\right\vert ^{2}+\left\vert \partial
_{y}^{2}H(y)\right\vert \left\vert H(y)\right\vert \right) , \\
c_{2}\left( \rho \right) &=&c\sup_{y\in \lbrack \rho ,+\infty )}\left\vert
\partial _{y}H(y)\right\vert .
\end{eqnarray*}
Note that both supremums tend to $0$ for $\rho \rightarrow +\infty $. A
similar argument shows that%
\begin{equation}
\left\vert J_{4}\right\vert \leq c_{1}\left( \rho \right)
J_{2}^{1/2}I_{1}^{1/2}+c_{2}\left( \rho \right) J_{2}^{1/2}K_{1}^{1/2}.
\label{J4est}
\end{equation}

It remains to process the second term in \eqref{star}, that is,
\begin{gather*}
I_{2}=\int_{\Pi _{\rho }}\left\vert z\partial _{y}u_{1}(y)+\partial
_{y}\partial _{z}u^{\perp }(y,z)\right\vert ^{2}\,dx= \\
\int_{\Pi _{\rho }}\left\vert \partial _{y}\partial _{z}u^{\perp
}(y,z)\right\vert ^{2}\,dx\ +\int_{\Pi _{\rho }}\left\vert \partial
_{y}u_{1}(y)\right\vert ^{2}\,dx+2\int_{\Pi _{\rho }}\partial
_{y}u_{1}(y)\partial _{y}\partial _{z}u^{\perp }(y,z)\,dx.
\end{gather*}%
\vspace{-7mm}
\begin{equation*}
\hspace{2cm}\rotatebox{315}{$=$}\raisebox{-5mm}{$K_1$}\hspace{33mm}%
\rotatebox{315}{$=:$}\raisebox{-5mm}{$K_2$}\hspace{33mm}\rotatebox{315}{$=:$}%
\raisebox{-5mm}{$2 K_3$}
\end{equation*}%
So it follows that
\begin{equation}
K_{1}=I_{2}-K_{2}-2K_{3}\leq I_{2}+2\left\vert K_{3}\right\vert .
\label{K1byK3}
\end{equation}%
We continue by estimating the integral $K_{3}$:
\begin{eqnarray*}
&&\left\vert K_{3}\right\vert =\left\vert \int_{\rho }^{+\infty }\partial
_{y}u_{1}(y)\int_{\Upsilon (y)}\partial _{y}\partial _{z}u^{\perp
}(y,z)\,dz\,dy\right\vert \\
&\leq &\left( \int\limits_{\rho }^{+\infty }G_{H^{3},\rho }(y)|\partial
_{y}u_{1}(y)|^{2}\,dy\right) ^{\frac{1}{2}}\left( \int\limits_{\rho
}^{+\infty }G_{H^{3},\rho }(y)^{-1}\left\vert \int_{\Upsilon (y)}\partial
_{y}\partial _{z}u^{\perp }(y,z)\,dz\right\vert ^{2}\,dy\right) ^{\frac{1}{2}%
} \\
&\leq &cJ_{2}^{1/2}\left( \int_{\rho }^{+\infty }G_{H^{3},\rho
}(y)^{-1}\left\vert \int_{\Upsilon (y)}\partial _{y}\partial _{z}u^{\perp
}(y,z)\,dz\right\vert ^{2}\,dy\right) ^{1/2}.
\end{eqnarray*}%
Differentiating the second formula \eqref{ortog} with respect to $y$ yields
\begin{equation*}
\int_{\Upsilon (y)}\partial _{y}\partial _{z}u^{\perp }(y,z)\,dz+\textstyle%
\sum\limits_{\pm }\partial _{z}u^{\perp }(y,\pm H(y))\partial _{y}H(y)=0\,.
\end{equation*}%
By the trace inequality we find that
\begin{multline*}
\left\vert \int_{\Upsilon (y)}\partial _{y}\partial _{z}u^{\perp
}(y,z)\,dz\right\vert ^{2}=\left( \textstyle\sum\limits_{\pm }\partial
_{z}u^{\perp }(y,\pm H(y))\right) ^{2}|\partial _{y}H(y)|^{2}\leq \\
\leq c|H(y)||\partial _{y}H(y)|^{2}\int_{\Upsilon (y)}|\partial
_{z}^{2}u^{\perp }(y,z)|^{2}\,dz\,.
\end{multline*}%
Thus, from the relation \eqref{log} which implies \eqref{logres}, we get
\begin{equation}
\left\vert K_{3}\right\vert \leq c\sup\limits_{y\in \lbrack \rho ,+\infty
)}\left\{ \left\vert G_{H^{3},\rho }(y)\right\vert ^{-1/2}\left\vert
\partial _{y}H(y)\right\vert \left\vert H(y)\right\vert ^{1/2}\right\}
J_{2}^{1/2}I_{1}^{1/2}\leq cJ_{2}^{1/2}I_{1}^{1/2}.  \label{K3est}
\end{equation}

We find by combining (\ref{K1byK3}) and (\ref{K3est}) that
\begin{equation*}
K_{1}\leq I_{2}+cJ_{2}^{1/2}I_{1}^{1/2}
\end{equation*}%
and it holds with (\ref{J3est}) respectively (\ref{J4est}) that
\begin{eqnarray}
\left\vert J_{3}\right\vert &\leq &c_{1}\left( \rho \right)
J_{1}^{1/2}I_{1}^{1/2}+c_{2}\left( \rho \right) J_{1}^{1/2}\left(
I_{2}+cJ_{2}^{1/2}I_{1}^{1/2}\right) ^{1/2},  \label{J3fin} \\
\left\vert J_{4}\right\vert &\leq &c_{1}\left( \rho \right)
J_{2}^{1/2}I_{1}^{1/2}+c_{2}\left( \rho \right) J_{2}^{1/2}\left(
I_{2}+cJ_{2}^{1/2}I_{1}^{1/2}\right) ^{1/2}.  \label{J4fin}
\end{eqnarray}%
Using first (\ref{star}) and (\ref{star3}), next (\ref{J3fin}) and (\ref%
{J4fin}) for $\rho $ large enough, and finally (\ref{I1est}), (\ref{J1est})
and (\ref{J2est}) we conclude that indeed
\begin{gather*}
\left\Vert \nabla _{x}^{2}u;L^{2}(\Pi _{\rho })\right\Vert
^{2}=I_{1}+4I_{2}+I_{3}\geq \\
\geq I_{1}+4I_{2}+J_{1}+J_{2}+2J_{3}+2J_{4}\geq \rule{0mm}{5mm} \\
\geq \frac{1}{2}\left( I_{1}+4I_{2}+J_{1}+J_{2}\right) \geq \frac{1}{2}%
\left( I_{1}+J_{1}+J_{2}\right) \geq \\
\geq c\min \{H^{-4}(\rho ),W_{H}(\rho ),W_{H^{3}}(\rho )\}\left\Vert
u;L^{2}(\Pi _{\rho })\right\Vert ^{2},
\end{gather*}
whenever $\rho $ is large enough.
\end{proof}

\begin{theorem}
\label{Theorem2}Suppose that $H$ satisfies Assumption \ref{ass2} and that 
\begin{equation*}
\lim\limits_{t\rightarrow \infty }\partial _{t}\left( \log \left( H\left(
t\right) \right) \right) =-\infty.
\end{equation*} 
Then the embedding $H^{2}(\Omega )$ in $L^{2}(\Omega )$ is compact and the 
spectrum of the problem \eqref{0.4} with the Neumann boundary conditions \eqref{0.6} on both 
sides of the peak is discrete.
\end{theorem}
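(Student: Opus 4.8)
The plan is to obtain the compactness of the embedding directly from inequality \eqref{K} furnished by the preceding Proposition, along the very same lines as the argument following \eqref{0.16} in the proof of Theorem \ref{Theorem1}. The whole matter thus reduces to showing that the multiplier $K(\rho)=c\min\{H^{-4}(\rho),W_{H}(\rho),W_{H^{3}}(\rho)\}$ increases without bound as $\rho\to+\infty$; once this is established, the splitting $\gamma=\gamma_{0}+\gamma_{\rho}$ of the embedding operator does the rest.

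First I would treat each of the three quantities in the minimum separately. Since $H$ satisfies Assumption \ref{ass2}, it is positive, monotone decreasing and integrable on $[0,\infty)$; a positive decreasing integrable function necessarily tends to $0$, so $H(\rho)\to 0$ and hence $H^{-4}(\rho)\to+\infty$. For the two Hardy constants $W_{H}(\rho)$ and $W_{H^{3}}(\rho)$, the standing hypothesis $\partial_{t}(\log H(t))\to-\infty$ is exactly condition \eqref{log} of Lemma \ref{tech3}, whose conclusion is $W_{H}(\rho)\to+\infty$ and $W_{H^{3}}(\rho)\to+\infty$ as $\rho\to\infty$. Being the minimum of three expressions each diverging to $+\infty$, the multiplier $K(\rho)$ diverges to $+\infty$ as well.

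Next comes the compactness step. I would write the embedding $\gamma\colon H^{2}(\Omega)\to L^{2}(\Omega)$ as $\gamma=\gamma_{0}+\gamma_{\rho}$, where $\gamma_{\rho}$ is composed with multiplication by the characteristic function of the tail $\Pi_{\rho}$ and $\gamma_{0}=\gamma-\gamma_{\rho}$ is effectively the restriction to the complementary set $\Omega\setminus\Pi_{\rho}$. The latter set is bounded with smooth boundary, so $\gamma_{0}$ is compact by the Rellich--Kondrachov theorem. From \eqref{K} we get $\|u;L^{2}(\Pi_{\rho})\|\le K(\rho)^{-1/2}\|u;H^{2}(\Omega)\|$, whence the operator norm of $\gamma_{\rho}$ does not exceed $K(\rho)^{-1/2}$, which tends to $0$ by the previous paragraph. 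Thus $\gamma$ is the operator-norm limit of the compact operators $\gamma_{0}$ and is therefore itself compact, which is the asserted compact embedding $H^{2}(\Omega)\subset L^{2}(\Omega)$. Since in the \textit{\textbf{N--N}} case the form domain $\mathbb{H}$ is a closed subspace of $H^{2}(\Omega)$, the embedding $\mathbb{H}\subset L^{2}(\Omega)$ is compact as well, and \cite[Theorem 10.1.5]{BiSo} then yields the discreteness of the spectrum.

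The genuinely delicate work is already encapsulated in the Proposition, so the only point in Theorem \ref{Theorem2} that calls for care is the \emph{simultaneous} divergence of all three ingredients of $K(\rho)$ --- in particular the recognition that \eqref{log} is precisely the hypothesis needed to push both Hardy constants to infinity through Lemma \ref{tech3}. Everything after that reproduces, almost verbatim, the approximation scheme already used for Theorem \ref{Theorem1}, and I do not anticipate any further obstacle there.
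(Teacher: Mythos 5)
Your proposal is correct and coincides with the paper's own (largely implicit) proof: the paper likewise deduces Theorem~\ref{Theorem2} by noting that $H^{-4}(\rho)\to+\infty$ from the decay of $H$ and that hypothesis \eqref{log} triggers Lemma~\ref{tech3} to force $W_{H}(\rho),W_{H^{3}}(\rho)\to+\infty$, so that $K(\rho)\to+\infty$ in the Proposition's estimate \eqref{K}, after which the splitting $\gamma=\gamma_{0}+\gamma_{\rho}$ and \cite[Theorem 10.1.5]{BiSo} are invoked exactly as in the argument for Theorem~\ref{Theorem1}. Your added remarks (a positive decreasing integrable $H$ tends to $0$; $\mathbb{H}\subset H^{2}(\Omega)$ closed in the \textit{\textbf{N--N}} case) merely make explicit what the paper leaves tacit.
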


Now the following statement, which, as mentioned in the beginning of the
paper, follows from a result in \cite{AdFu,Ev}.

\begin{corollary}
\label{Corollary2}If \eqref{0.2} holds, then the spectrum of the problem %
\eqref{0.4}, with the Neumann boundary conditions \eqref{0.6} on the sides of the peak, is
discrete.
\end{corollary}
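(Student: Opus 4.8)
The plan is to bypass the Hardy-inequality machinery of Theorem~\ref{Theorem2} entirely and instead read off the conclusion from the sharp compact-embedding criterion of \cite{AdFu,Ev}. First I would recall the abstract dictionary established in Section~2: by \cite[Theorem~10.1.5]{BiSo}, the spectrum of the operator $A$ associated with the form $a$ is discrete if and only if the embedding $\mathbb{H}\hookrightarrow L_2(\Omega)$ is compact. Since $\mathbb{H}$ is by construction a closed subspace of $H^2(\Omega)$, a compact embedding $H^2(\Omega)\hookrightarrow L_2(\Omega)$ restricts to a compact embedding of $\mathbb{H}$; hence it suffices to establish compactness of $H^2(\Omega)\hookrightarrow L_2(\Omega)$. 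This is precisely the point at which the \textbf{N--N} case is favourable: no essential conditions need be tracked, so one may work with the full space $H^2(\Omega)$.

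Next I would invoke the embedding criterion itself. As recalled in the Motivation, the results of \cite{AdFu,Ev} assert that on a peak-shaped domain with profile $H$ the embedding $H^m(\Omega)\subset L_2(\Omega)$ is compact if and only if $\lim_{y\to+\infty}H(y+\epsilon)/H(y)=0$ for every $\epsilon>0$. This pointwise ratio condition is exactly the second, equivalent, formulation of hypothesis~\eqref{0.2}. Taking $m=2$ therefore yields compactness of $H^2(\Omega)\hookrightarrow L_2(\Omega)$, and feeding this into the dictionary of the previous paragraph gives discreteness of the spectrum.

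The only genuine verification — and the step I would treat with care — is confirming that \eqref{0.2} really is the hypothesis demanded by \cite{AdFu,Ev}. This breaks into two routine checks. First, the equivalence inside \eqref{0.2} between the integral condition $\int_y^{+\infty}H(\eta)/H(y)\,d\eta\to 0$ and the ratio condition $H(y+\epsilon)/H(y)\to 0$; this follows from the monotonicity of $H$ together with the assumed decay of $H'$ and boundedness of $H''$, which let one compare the tail integral with the local values of $H$. Second, that the bounded smooth part $\Omega\setminus\Pi_R$ contributes nothing to the compactness question: on it the classical Rellich theorem already supplies a compact embedding, so compactness of $H^2(\Omega)\hookrightarrow L_2(\Omega)$ is governed entirely by the peak $\Pi_R$. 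I do not expect any serious obstacle here; the whole force of the argument is carried by the cited criterion, and the corollary is essentially a translation of Theorem~\ref{Theorem2} into the sharp language of \cite{AdFu,Ev}.
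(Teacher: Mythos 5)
Your proposal is correct and is exactly the paper's own argument: the authors prove Corollary~\ref{Corollary2} precisely by citing the compact-embedding criterion of \cite{AdFu,Ev} with $m=2$ (noting, as they state in the Motivation, that the second condition in \eqref{0.2} yields compactness of $H^{m}(\Omega)\subset L_{2}(\Omega)$) and then invoking \cite[Theorem 10.1.5]{BiSo}, bypassing the Hardy-type machinery of Theorem~\ref{Theorem2}. Your added details --- restricting the compact embedding to the closed subspace ${\mathbb{H}}$ and disposing of the bounded part of $\Omega$ by Rellich's theorem --- are sound elaborations of steps the paper leaves implicit.
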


\begin{remark}
Note that the functions $H(y)=y^{-\alpha }$ and $H(y)=\exp (-\alpha y)$, $%
\alpha >0$, do not satisfy the requirement in \eqref{0.2}. The functions $H(y)=\exp
(-y^{1+\alpha })$ with $\alpha >0$ however do.
\end{remark}

\section{The incomplete Dirichlet condition and a 2d-estimate}
Let the boundary conditions provide only the single stable condition
\begin{equation}
u(x)=0,\quad x\in \Sigma _{\rho }^{+}\quad (\mbox{ or }x\in \Sigma _{\rho
}^{-})\,.  \label{five}
\end{equation}%
Then Friedrich's inequality on section $\Upsilon (y)$ holds and,
consequently,
\begin{equation*}
\Vert \mathcal{A}\,\partial _{z}u;L^{2}(\Pi _{\rho })\Vert ^{2}\geq c\Vert
\mathcal{A}\,H^{-2}u;L^{2}(\Pi _{\rho })\Vert ^{2}
\end{equation*}%
for every positive weight function $y\mapsto \mathcal{A}(y)$.

The function $v=\partial _{z}u$ can be represented as the sum $%
v(x)=v_{0}(y)+v^{\perp }(x)$ where, for $y>\rho $, the component $v^{\perp }$
satisfies the first conditions in \eqref{ortog}. Therefore,
\begin{multline*}
\int_{\Pi _{\rho }}|\nabla _{x}^{2}u(x)|^{2}\,dx\geq \int_{\Pi _{\rho
}}|\nabla _{x}v(x)|^{2}\,dx\geq \int_{\rho }^{+\infty }2H(y)|\partial
_{y}v_{0}(y)|^{2}\,dy\ + \\
+\int_{\Pi _{\rho }}|\partial _{z}v^{\perp }(y,z)|^{2}\,dx+2\int_{\Pi _{\rho
}}\partial _{y}v_{0}(y)\partial _{y}v^{\perp
}(y,z)\,dx=:I_{4}+I_{5}+2I_{6}\,.
\end{multline*}%
Setting $Z_{H}(y)=H(y)^{-1}G_{H}(y)$, we get
\begin{equation*}
I_{4}\geq \int_{\rho }^{+\infty }2G_{H}(y)|v_{0}(y)|^{2}dy\geq \int_{\rho
}^{+\infty }2Z_{H}(y)H(y)|v_{0}(y)|^{2}dy=\Vert Z_{H}v_{0};L^{2}(\Pi _{\rho
})\Vert ^{2}.
\end{equation*}%
Friedrich's inequality implies $I_{5}=\Vert \partial _{z}v^{\perp
};L_{2}(\Pi _{\rho })\Vert \geq c\Vert H^{-2}v^{\perp };L^{2}(\Pi _{\rho
})\Vert ^{2}$. Furthermore,
\begin{multline*}
I_{6}=\int_{\Pi _{\rho }}\partial _{y}v_{0}(y)\partial _{y}v^{\perp
}(y,z)\,dx\leq I_{5}^{1/2}\bigg(\int_{\rho }^{+\infty }\frac{1}{2H(y)}\bigg|%
\int_{\Upsilon (y)}\partial _{y}v^{\perp }(y,z)\,dz\bigg|^{2}\bigg)^{1/2} \\
\leq I_{5}^{1/2}\bigg(\int_{\rho }^{+\infty }\frac{1}{2H(y)}\big|v^{\perp
}(y,H(y))\partial _{y}H(y)-v^{\perp }(y,-H(y))\partial _{y}H(y)\big|^{2}%
\bigg)^{1/2}.
\end{multline*}%
Thus $I_{6}\leq c|\partial _{y}H(\rho )|I_{5}^{1/2}I_{6}^{1/2}$ holds and
hence
\begin{equation*}
\left\Vert \nabla _{x}^{2}u;L^{2}(\Pi _{\rho })\right\Vert ^{2}\geq
c\left\Vert \min \{H^{-4};H^{-3}G_{H}\}u;L^{2}(\Pi _{\rho })\right\Vert
^{2}.\medskip
\end{equation*}

\begin{theorem}
\label{Theorem3}Suppose that $\lim\limits_{y\rightarrow \infty
}H(y)^{-3}G_{H}\left( y\right) =+\infty $. Then problem \eqref{0.4} with 
the boundary condition as in \eqref{five} has only
a discrete spectrum.
\end{theorem}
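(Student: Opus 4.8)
The plan is to convert the weighted two-dimensional estimate derived immediately above the statement into compactness of the embedding $\gamma\colon\mathbb{H}\hookrightarrow L^2(\Omega)$, and then to invoke the abstract equivalence between compactness of this embedding and discreteness of the spectrum (Theorem~10.1.5 of \cite{BiSo}), exactly as in the proof of Theorem~\ref{Theorem1}. Writing $m(\rho):=\inf_{y\geq\rho}\min\{H(y)^{-4},\,H(y)^{-3}G_H(y)\}$ and using that $H$, and hence each of the three weights, is monotone, the estimate just obtained yields $\int_{\Pi_\rho}|u|^2\,dx\leq (c\,m(\rho))^{-2}\,\|\nabla_x^2u;L^2(\Pi_\rho)\|^2$ for every $u\in\mathbb{H}$.

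First I would verify that the single hypothesis $\lim_{y\to\infty}H(y)^{-3}G_H(y)=+\infty$ already forces $m(\rho)\to\infty$. Being positive and monotone decreasing, $H$ has a limit $L\geq0$; if $L>0$ then $\int_R^yH(\tau)^{-1}\,d\tau$ grows while $H^{-4}$ stays bounded, so $H^{-3}G_H=\tfrac14H^{-4}\bigl(\int_R^yH^{-1}\bigr)^{-2}\to0$, contradicting the hypothesis. Hence $L=0$, so $H(y)^{-4}\to+\infty$ as well, and consequently $\min\{H^{-4},H^{-3}G_H\}\to+\infty$, i.e. $m(\rho)\to+\infty$ as $\rho\to+\infty$.

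The core step is then the approximation-by-compact-operators argument familiar from Theorem~\ref{Theorem1}. I would split $\gamma=\gamma_0+\gamma_\rho$, where $\gamma_\rho$ is the embedding composed with multiplication by the characteristic function of $\Pi_\rho$ and $\gamma_0=\gamma-\gamma_\rho$. The range of $\gamma_0$ consists of functions supported in $\Omega\setminus\Pi_\rho$, a domain with compact Lipschitz closure, so $\gamma_0$ is compact by the Rellich theorem. For the tail, the displayed estimate combined with $a(u,u)\geq(1-\nu)\|\nabla_x^2u;L^2(\Omega)\|^2$ gives $\|\gamma_\rho u;L^2(\Omega)\|^2\leq (c\,m(\rho))^{-2}\|\nabla_x^2u;L^2(\Pi_\rho)\|^2\leq C\,m(\rho)^{-2}\|u\|_{\mathbb{H}}^2$, whence $\|\gamma_\rho\|\leq C^{1/2}m(\rho)^{-1}\to0$. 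Thus $\gamma$ is a norm limit of compact operators and is itself compact, and by Theorem~10.1.5 of \cite{BiSo} the spectrum is discrete.

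I expect the genuinely delicate analytic work to be already behind us: it is the 2d-estimate relating $\|\nabla_x^2u;L^2(\Pi_\rho)\|^2$ to the weighted $L^2$-norm, obtained through the decomposition $v=\partial_zu=v_0+v^\perp$, Friedrich's inequality on $\Upsilon(y)$, and Lemma~\ref{Lemma2}. The only points requiring a little care in the remaining argument are organizational: extracting the infimum $m(\rho)$ correctly (justified by monotonicity of $H$), ensuring that the estimate, derived on $\Pi_\rho$, applies to the restriction to $\Pi_\rho$ of an arbitrary $u\in\mathbb{H}$ (by a density argument over smooth functions vanishing for $y<\rho$), and confirming that the hypothesis on $H^{-3}G_H$ alone suffices, which is the content of the first step above.
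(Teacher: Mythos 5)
Your proposal is correct and takes essentially the same route as the paper: the paper's proof of Theorem~\ref{Theorem3} consists precisely of the weighted estimate $\Vert \nabla_x^2 u;L^2(\Pi_\rho)\Vert^2 \geq c\,\Vert \min\{H^{-4};H^{-3}G_H\}\,u;L^2(\Pi_\rho)\Vert^2$ derived just before the statement, combined implicitly with the splitting $\gamma=\gamma_0+\gamma_\rho$ and the approximation-by-compact-operators argument from Theorem~\ref{Theorem1} and Theorem~10.1.5 of \cite{BiSo}, which is exactly what you carry out. Your extra verification that the single hypothesis $H^{-3}G_H\to\infty$ forces $H\to 0$, hence $H^{-4}\to\infty$ and $m(\rho)\to\infty$, is a correct and welcome addition that the paper leaves tacit (two small blemishes: there are two weights here, not three, and monotonicity is not needed to pass from the pointwise divergence of the weight to $m(\rho)\to\infty$, since that equivalence holds for any positive function).
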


\begin{remark}The functions $H(y) = y^{-1 - \alpha}$ with $\alpha> 0$ meet the condition
in Theorem \ref{Theorem3}.
\end{remark}

\end{document}